\newcommand{\Q}{{\mathbb Q}}
\newcommand{\F}{{\mathbb F}}
\newcommand{\N}{{\mathbb N}}
\newcommand{\Z}{{\mathbb Z}}
\newcommand{\fp}{\mathfrak p}
\newcommand{\fP}{\mathfrak P}
\newcommand{\disc}{{\operatorname{disc}}}
\newcommand{\rank}{{\operatorname{rank}\,}}
\newcommand{\prank}{{\operatorname{rank}_p\,}}
\newcommand{\trank}{{\operatorname{rank}_2\,}}
\newcommand{\Cl}{{\operatorname{Cl}}}
\newcommand{\Ram}{{\operatorname{Ram}}}
\newcommand{\Gal}{{\operatorname{Gal}}}
\newcommand{\eps}{\varepsilon}
\newcommand{\lra}{\longrightarrow}
\newcommand{\too}{\longmapsto}
\newtheorem{thm}{Theorem}
\newtheorem{prop}[thm]{Proposition}
\newtheorem{lem}[thm]{Lemma}
\newtheorem{prob}[thm]{Problem}
\theoremstyle{definition}
\begin{document}
\title{The $4$-class group  of real quadratic number fields}
\author{Franz Lemmermeyer}
\address{M\"orikeweg 1, 73489 Jagstzell, Germany}
\email{hb3@ix.urz.uni-heidelberg.de}

\abstract 
In this paper we give an elementary proof of results on the
structure of $4$-class groups of real quadratic number fields
originally due to A.~Scholz. In a second (and independent)
section we strengthen C.~Maire's result that
the $2$-class field tower of a real quadratic number field is infinite
if its ideal class group has $4$-rank $\ge 4$, using a technique due
to F.~Hajir.  \endabstract

\maketitle

\section{Introduction}

Let $d$ and $d'$ be discriminants of real quadratic number fields,
and suppose that they are the product of positive prime
discriminants (equivalently, they are sums of two squares). 
If $(d,d') = 1$, and if $(d/p') = +1$ for all primes $p' \mid d'$,
then we can define a biquadratic Jacobi symbol
by $(d/d')_4 = \prod_{p' \mid d'} (d/p')_4$. Here
$(d/p')_4$ is the rational biquadratic residue symbol
(it is useful to put $(d/2)_4 = (-1)^{(d-1)/8}$; note that 
$d \equiv 1 \bmod 8$ if $8 \mid d'$). Observe, however,
that this symbol is not multiplicative in the numerator.
We also agree to say that a discriminant $d'$ divides
another discriminant $d$ if there exists a discriminant
$d''$ such that $d = d'd''$.

The following result due to R\'edei is well known:

\begin{prop}
Let $d$ be the discriminant of a quadratic number field.
The cyclic quartic extensions of $k$ which are unramified 
outside $\infty$ correspond to $C_4^+$-fac\-tor\-iza\-tions
of $d$, i.e. factorizations $d = d_1d_2$ into two
relatively prime positive discriminants such that
$(d_1/p_2) = (d_2/p_1) = +1$ for all $p_j \mid d_j$.
If $d = d_1d_2$  is such a $C_4^+$-fac\-tor\-ization, then the
extension $k(\sqrt{\alpha}\,)$ can be constructed by
choosing a suitable solution of $x^2 - d_1y^2 = d_2z^2$
and putting $\alpha = x+y\sqrt{d_1}$. In this case,
every cyclic extension of $k$ which is unramified outside 
$\infty$ and contains $\Q(\sqrt{d_1},\sqrt{d_2}\,)$ has
the form $k(\sqrt{d' \alpha}\,)$, where $d'$ is a 
discriminant dividing $d$.
\end{prop}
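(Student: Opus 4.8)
The plan is to prove the two directions of the correspondence and then read off the explicit construction. The engine is the classical criterion for embedding a biquadratic field into a cyclic quartic one: writing $K = k(\sqrt{d_1}\,)$ and letting $\sigma$ generate $\Gal(K/k)$, a quadratic extension $M = K(\sqrt{\alpha}\,)$ with $\alpha \in K^*$ is cyclic of degree $4$ over $k$ if and only if $N_{K/k}(\alpha) \in d_1\,(k^*)^2$. I would prove this by extending $\sigma$ to $\tilde\sigma$ on $M$ and computing that $\tilde\sigma^2(\sqrt\alpha\,) = -\sqrt\alpha$ exactly under this norm condition, so that $\tilde\sigma$ has order $4$. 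With this in hand, given a factorization $d = d_1 d_2$ I look for $\alpha = x + y\sqrt{d_1}$, whose relative norm is $x^2 - d_1 y^2$; imposing $x^2 - d_1 y^2 = d_2 z^2$ gives $N_{K/k}(\alpha) = d_2 z^2 \equiv d_2 \equiv d_1 \pmod{(k^*)^2}$, the last step because $d_1 d_2 = d = (\sqrt d\,)^2$ is a square in $k$. Hence any nontrivial integer solution of $x^2 - d_1 y^2 = d_2 z^2$ yields a cyclic quartic $M = k(\sqrt\alpha\,)$, which is exactly the construction in the statement.

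Next I would show that solvability of this ternary form is equivalent to the $C_4^+$ conditions. By the Hasse--Minkowski theorem (or Legendre's criterion) the form $x^2 - d_1 y^2 - d_2 z^2$ has a nontrivial rational zero iff it has one over every completion of $\Q$; positivity of $d_1, d_2$ supplies the real point, and at each finite prime the local condition unwinds, through Hilbert symbols, into the rational quadratic residue conditions $(d_1/p_2) = +1$ for all $p_2 \mid d_2$ and $(d_2/p_1) = +1$ for all $p_1 \mid d_1$, with the special convention at $2$ recorded in the introduction. These are precisely the defining conditions of a $C_4^+$-factorization. Conversely, starting from a cyclic quartic $M/k$ unramified outside $\infty$, its unique quadratic subextension $K/k$ is unramified; since every order-$2$ character of the class group is trivial on its square, every unramified quadratic extension of $k$ lies in the genus field, hence $K = \Q(\sqrt{d_1},\sqrt{d_2}\,)$ for a coprime decomposition $d = d_1 d_2$. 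The cyclicity of $M/k$ forces $N_{K/k}(\alpha) \in d_1(k^*)^2$ for the relevant $\alpha$, and the same Hilbert-symbol computation returns the R\'edei conditions. This closes both halves of the correspondence.

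For the final assertion I would fix one solution $\alpha$ and vary the extension. Any cyclic quartic $M' \supseteq K$ unramified outside $\infty$ is $K(\sqrt\beta\,)$ with $N_{K/k}(\beta) \in d_1(k^*)^2$, so $\beta/\alpha$ has square relative norm. One then checks that, modulo squares, the twists of $\alpha$ that keep the extension unramified outside $\infty$ are exactly multiplication by discriminants $d' \mid d$: indeed $k(\sqrt{d'}\,)/k$ is unramified outside $\infty$ precisely for such $d'$, which is the genus-theoretic description of the unramified quadratic extensions of $k$. Since $k(\sqrt{d'\alpha}\,)$ sits inside the compositum $k(\sqrt\alpha\,)\cdot k(\sqrt{d'}\,)$ of two extensions unramified outside $\infty$, it is again unramified outside $\infty$, and it stays cyclic of degree $4$ because its relative norm differs from that of $\alpha$ by the square $(d')^2$. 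This yields that every such $M'$ has the form $k(\sqrt{d'\alpha}\,)$.

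\emph{The main obstacle} I anticipate lies not in the norm algebra nor in the Hasse principle, both of which are routine, but in verifying that $k(\sqrt\alpha\,)/k$ is genuinely unramified at the finite primes. This is what forces the word \emph{suitable} in the statement: one must select the solution $(x,y,z)$ primitively and with controlled sign and parity, and then carry out a local analysis at the primes dividing $d$, and most delicately at $2$, to see that the relative different is trivial. I expect this conductor computation to be the technical heart of the argument.
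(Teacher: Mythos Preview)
The paper does not actually prove this proposition: it is stated as ``well known'' and attributed to R\'edei, and the paper moves on immediately to use it. So there is no proof in the paper to compare your proposal against.

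That said, your outline is the standard route to R\'edei's theorem and is essentially sound. A few remarks. First, your cyclicity criterion is correct, and the reduction to solvability of $x^2 - d_1 y^2 = d_2 z^2$ via Legendre/Hasse--Minkowski is exactly how this is usually done. Second, you correctly flag the ramification verification as the technical heart; this is indeed where the word ``suitable'' earns its keep, and R\'edei's original argument (and later expositions) spend most of their effort on the local analysis at primes dividing $d$ and at $2$.

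Your argument for the last clause, however, is a bit thin. You assert that the admissible twists of $\alpha$ are exactly the $d' \mid d$, but you do not close the loop. The clean way is: given two cyclic quartic extensions $M = K(\sqrt\alpha)$ and $M' = K(\sqrt\beta)$ over $k$, both unramified outside $\infty$ and both containing $K$, the product $\alpha\beta$ has relative norm in $(k^*)^2$, so $K(\sqrt{\alpha\beta})/k$ is of type $(2,2)$ (or trivial). Being unramified outside $\infty$, its three quadratic subextensions of $k$ lie in the genus field, so $K(\sqrt{\alpha\beta}) = K(\sqrt{d'})$ for some discriminant $d' \mid d$. Hence $\beta \equiv d'\alpha \pmod{(K^*)^2}$ and $M' = k(\sqrt{d'\alpha})$. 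Your sketch gestures at this via genus theory but stops short of the compositum argument that actually pins down $\beta/\alpha$.
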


In particular, if $d$ is divisible by a negative prime
discriminant $d'$, then for every $C_4^+$-fac\-tor\-iza\-tion
$d = d_1d_2$ there is always a cyclic quartic extension
of $k = \Q(\sqrt{d}\,)$ which is unramified everywhere:
should  $k(\sqrt{\alpha}\,)$ be totally negative, simply
take $k(\sqrt{d' \alpha}\,)$. The problem of the existence
of cyclic quartic extension which are unramified everywhere
is thus reduced to the case where $d$ is a product of
positive prime discriminants. The next section is devoted
to an elementary proof of a result in this case; Scholz 
sketched a proof using class field theory in his
own special way in \cite{Sch}.

\section{Ramification at $\infty$}

Let $k$ be a real quadratic number field with discriminant
$d = \disc k$, and assume that $d$ is the product of
positive prime discriminants. Let $d = d_1\cdot d_2$ be
a $C_4^+$-factorization, i.e. assume that
$(d_1/p_2) = (d_2/p_1) = +1$ for all $p_1 \mid d_1$ and
all $p_2 \mid d_2$. Pick a solution of
$x^2-d_1y^2 = d_2z^2$ such that $k(\sqrt{d_1},\sqrt{\alpha}\,)$
(with $\alpha = x + y \sqrt{d_1}$) is a cyclic quartic extension 
of $k$ which is unramified outside $\infty$.

\begin{thm}\label{T1}
The extension $K = k(\sqrt{d_1},\sqrt{\alpha}\,)$ defined above 
is totally real if and only if $(d_1/d_2)_4 = (d_2/d_1)_4$.
If there is a cyclic octic extension $L/k$ containing $K$ and 
unramified outside $\infty$, then $(d_1/d_2)_4 = (d_2/d_1)_4 = +1$.
\end{thm}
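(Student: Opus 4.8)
The plan is to first extract the purely archimedean consequence of the hypothesis, and then to turn the existence of $L$ into a $C_4 \hookrightarrow C_8$ embedding problem whose obstruction I evaluate against the conic $x^2 - d_1 y^2 = d_2 z^2$.

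First I would note that the hypothesis already forces $K$ to be totally real, which by the first part of the theorem gives $(d_1/d_2)_4 = (d_2/d_1)_4$ for free. Write $\Gal(L/k) = \la \sigma \ra \cong \Z/8$, so that $K = L^{\la \sigma^4 \ra}$ is its quartic subfield and $\Gal(L/K) = \la \sigma^4 \ra$ is the unique subgroup of order $2$. At either real place $v$ of $k$ the decomposition group in $\Gal(L/k)$ is generated by a complex conjugation and hence has order $1$ or $2$; as the only involution in $\Z/8$ is $\sigma^4$, this group is contained in $\la \sigma^4 \ra = \Gal(L/K)$ and therefore fixes $K$ pointwise. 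Thus both infinite places of $k$ remain real in $K$, i.e.\ $K$ is totally real, and it remains only to show that the common symbol value is $+1$ rather than $-1$.

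For that value I would use that the existence of $L$ means exactly that the cyclic quartic $K/k$ embeds into a cyclic octic extension which is unramified at every finite prime. This is a central embedding problem with kernel $\Z/2$, solvable if and only if its obstruction in $\mathrm{Br}(k)[2]$ vanishes; writing $K = k(\sqrt{d_1},\sqrt{\alpha}\,)$ with $\alpha = x + y\sqrt{d_1}$ and $\alpha\alpha' = x^2 - d_1 y^2 = d_2 z^2$ (where $\alpha'$ is the conjugate of $\alpha$ over $k$), the obstruction is a product of quaternion classes built from $\alpha$, $d_1$ and a contribution at $2$. I would then impose that some solution be unramified outside $\infty$: this is a torsor condition under $k^*/k^{*2}$ that pins down the local Hilbert symbols $(\,\cdot\,,\cdot\,)_\fp$ at the ramified primes $\fp \mid d$. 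The matching with the biquadratic symbols comes from reducing the conic modulo the primes dividing $d_i$: modulo $p_2 \mid d_2$ one has $x^2 \equiv d_1 y^2$, so $d_1 \equiv (x/y)^2$ and hence $(d_1/p_2)_4 \equiv (x/y)^{(p_2-1)/2} = (xy/p_2)$ for a primitive solution (so that $p_2 \nmid y$); multiplying over $p_2 \mid d_2$ expresses $(d_1/d_2)_4$ through $(x,y,z)$, and symmetrically reducing modulo $p_1 \mid d_1$ gives $d_2 \equiv (x/z)^2$ and $(d_2/d_1)_4$ in terms of $(x,z)$. The solvability and unramifiedness conditions then force each of these local factors, hence both symbols, to equal $+1$.

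I expect the main obstacle to be precisely this last identification: computing the embedding obstruction explicitly and showing that, modulo the solution $(x,y,z)$, it coincides with the rational biquadratic symbol --- in effect a Scholz-type reciprocity computation. The delicate bookkeeping will be the contribution at the prime $2$ (controlled by the normalization $(d/2)_4 = (-1)^{(d-1)/8}$) and its interplay with the archimedean places already handled in the first step. The symmetry $d_1 \leftrightarrow d_2$, which interchanges the two symbols while leaving the octic hypothesis invariant, is a useful consistency check that both are driven to the same value.
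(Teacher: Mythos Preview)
Your archimedean opening is correct: the unique involution in $\Gal(L/k)\simeq\Z/8$ forces every decomposition group at $\infty$ to fix $K$, so $K$ is totally real and part one gives $(d_1/d_2)_4(d_2/d_1)_4=+1$. From here, however, your route diverges from the paper's and the remaining step is not actually carried out.

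The paper never invokes a Brauer obstruction. It argues inside $L$ by elementary decomposition-group analysis: a prime $\fp$ of $k_1=\Q(\sqrt{d_1})$ that ramifies in $k_1(\sqrt{\alpha})/k_1$ can neither ramify nor remain inert in $k_1(\sqrt{\alpha'})/k_1$ --- the first is excluded because all ramification indices in $L/k$ are $\le 2$, the second because it would force $Z/T$ cyclic of order $4$ inside $\Gal(L/k_1)\simeq D_4$, which has no such quotient. Hence $\fp$ splits, i.e.\ $(\alpha'/\fp)=1$; since $\alpha'\equiv 2x\bmod\fp$ this says $(2x/p)=1$ for every $p\mid d_2$, and the part-one formulas convert this directly into $(d_1/d_2)_4=+1$. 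This is explicit and avoids cohomology entirely.

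Your embedding-problem approach is a legitimate alternative in principle, but as written it is an outline with a genuine gap. Two concrete points. First, the obstruction in $\mathrm{Br}(k)[2]$ that you cite controls the existence of \emph{some} $C_8$ over $K$; the requirement that a solution be unramified at all finite primes is an additional torsor condition whose local constraints you have not written down or solved. Second, your reduction of the conic mod $p_1,p_2$ reproduces exactly the identities already used in part one, and you have not exhibited which local invariant of the $C_4\hookrightarrow C_8$ problem isolates $(d_1/d_2)_4$ rather than the product. The sentence ``the solvability and unramifiedness conditions then force each of these local factors \ldots\ to equal $+1$'' is the theorem itself, not a proof of it; what you call ``the main obstacle'' is the entire content of the second claim.
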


\begin{proof}
Assume first that $d \equiv 1 \bmod 4$; then we can always
choose $x \in \Z$, $y \in \N$ in such a way that 
$x-1 \equiv y \equiv 1 \bmod 2$. Then $\alpha$ will be
$2$-primary if and only if $x+y \equiv 1 \bmod 4$.
We now proceed as in \cite{Lem} and reduce the equation
$x^2-d_1y^2 = d_2z^2$ modulo certain primes.
From $x^2 \equiv d_1y^2 \bmod d_2$ and
$ x^2 \equiv d_2z^2 \bmod d_1$ we get
$$ \Big(\frac{x}{d_2}\Big) = 
        \Big(\frac{d_1}{d_2}\Big)_4\Big(\frac{y}{d_2}\Big), \qquad
   \text{and } \qquad \Big(\frac{x}{d_1}\Big) = 
        \Big(\frac{d_2}{d_1}\Big)_4\Big(\frac{z}{d_1}\Big).$$
Multiplying these equations gives
$$ \Big(\frac{d_1}{d_2}\Big)_4 \Big(\frac{d_2}{d_1}\Big)_4 =
        \Big(\frac{x}{d_1d_2}\Big)\Big(\frac{y}{d_2}\Big)
        \Big(\frac{z}{d_1}\Big).$$
Considering $x^2-d_1y^2 = d_2z^2$ modulo $x$ shows
$$ \Big(\frac{-d_1}{x}\Big) = \Big(\frac{d_2}{x}\Big), \qquad
   \text{i.e.} \qquad \Big(\frac{x}{d_1d_2}\Big) = 
        \Big(\frac{d_1d_2}{x}\Big) = \Big(\frac{-1}{x}\Big).$$
Now write $y = 2^ju$ for some odd $u \in \N$; then 
$$ \Big(\frac{y}{d_2}\Big) = \Big(\frac{2}{d_2}\Big)^j
        \Big(\frac{u}{d_2}\Big)
        = \Big(\frac{2}{d_2}\Big)^j\Big(\frac{d_2}{u}\Big)
        = \Big(\frac{2}{d_2}\Big)^j.$$
But $j = 1$ implies $d_2 \equiv 5 \bmod 8$, and for $j \ge 2$ 
we have $d_2 \equiv 1 \bmod 8$; this shows that $(2/d_2)^j = (-1)^{y/2}$.
Finally we easily see that $(z/d_1) = (d_1/z) = 1$, hence we have
shown
$$ \Big(\frac{d_1}{d_2}\Big)_4 \Big(\frac{d_2}{d_1}\Big)_4 =
        \Big(\frac{-1}{x}\Big) (-1)^{y/2} =
        (-1)^{(|x|+y-1)/2}.$$
The right hand side equals $+1$ if $x > 0$ (the assumption
that $\alpha$ is $2$-primary says $x+y \equiv 1 \bmod 4$),
and is $-1$ if $x < 0$. Therefore $K$ is real if and only
if $(d_1/d_2)_4 (d_2/d_1)_4 = + 1$.

Now assume that $L/k$ is a cyclic unramified octic extension 
containing $K$. We claim that the prime ideals $\fp$ ramified
in $k_1(\sqrt{\alpha}\,)/k_1$ (where $k_j = \Q(\sqrt{d_j}\,)$)
must split in $k_1(\sqrt{\alpha'}\,)/k_1$, where $\alpha'$
is the conjugate of $\alpha$.

We first show that such $\fp$ are not ramified in 
$k_1(\sqrt{\alpha'}\,)/k_1$: in fact, ramifying primes
must divide $d_1d_2$, since $L/k$ is unramified. If
$\fp \mid d_1$, then $\fp$ ramifies completely in 
$k_1(\sqrt{\alpha}\,)/\Q$, which contradicts the fact
that all ramification indices must divide $2$ (again
because $L/k$ is unramified). Assume that an $\fp \mid d_2$
ramifies in both $k_1(\sqrt{\alpha}\,)/k_1$ and 
$k_1(\sqrt{\alpha'}\,)/k_1$; since primes dividing $d_2$ split 
in $k_1/\Q$ and ramify in $F/k_1$ (where $F = k_2k_1$),
$\fp$ would ramify in all three quadratic extensions of $k_1$
contained in $L$, and again its ramification index would 
have to be $\ge 4$.

Now suppose that $\fp$ is inert in $k_1(\sqrt{\alpha'}\,)/k_1$.
Then the prime ideal $\fP$ in $F$ above $\fp$ is inert in $K/F$, 
and since $L/F$ is cyclic, it is inert in $L/F$. Let $e$,
$f$ and $g$ denote the order of the ramification, inertia
and decomposition group $V$, $T$, and $Z$ of $\fP$, respectively; 
we have seen that $e = 2$, $f \ge 4$ and $g \ge 2$. Since 
$efg = (L:\Q) = 16$, we must have equality. Thus $k_1$ is the 
splitting field, and we have $Z = \Gal(L/k_1)$. We know that 
$\Gal(L/k_1) \simeq D_4$, and since $Z/T$ is always
cyclic of order $f$, $T$ must fix a cyclic quartic extension of
$k_1$ in $L$. But such an extension does not exist; this
contradiction shows that $\fp$ splits.

Finally, prime ideals $\fp$ ramifying in 
$k_1(\sqrt{\alpha}\,)/k_1$ divide $\alpha$ (otherwise 
$\fp$ would be a prime above $2$; but here we assume that 
$d$ is odd, i.e. $K/\Q$ is unramified above $2$); they 
split in $k_1(\sqrt{\alpha'}\,)/k_1$ if and only if 
$(\alpha'/\fp) = 1$. But now
$$ \Big(\frac{x-y\sqrt{d_1}}{\fp}\Big) =
   \Big(\frac{2x}{\fp}\Big) = \Big(\frac{2x}{p}\Big),$$
where $p$ is the prime under $\fp$; here we have used
that $x-y\sqrt{d_1} \equiv x-y\sqrt{d_1} + (x+y\sqrt{d_1})
\equiv 2x \bmod \fp$.
The proof above shows that $(2x/d_2) = (2/d_2)^j (d_1/d_2)_4$;
since $(2/d_2) = (2/d_2)^j$, we conclude that $\fp$ splits
in $k_1(\sqrt{\alpha}\,)/k_1$ only if $(d_1/d_2)_4 = 1$.

The proof in the case where one of the $d_i$ is divisible by 
$8$ is left to the reader.
\end{proof}

Theorem \ref{T1} contains many results on the solvability
of negative Pell equations (due to Dirichlet, Epstein, Kaplan
and others) as special cases. In fact, consider the case
$d = pqr$, where $p \equiv q \equiv r \equiv 1 \bmod 4$
are primes. Assume that $(p/q) = (p/r) = - (q/r) = 1$.
Then $d = p \cdot qr$ is the only $C_4^+$-factorization
of $d$, hence $\Cl_2^+(k) \simeq (2,2^n)$ for some
$n \ge 2$. If $(p/qr)_4 = -(qr/p)_4$, then there exists
a totally complex cyclic quartic extension unramified
outside $\infty$: this is only possible if $N\eps = +1$
for the fundamental unit $\eps$ of $k = \Q(\sqrt{d}\,)$.
If $(p/qr)_4 = (qr/p)_4$, on the other hand, then the
cyclic quartic extension is unramified everywhere, and
if $(p/qr)_4 = (qr/p)_4 = -1$, it cannot be extended to
a cyclic octic extension unramified outside $\infty$:
this shows that $\Cl_2(k) \simeq \Cl_2^+(k) \simeq (2,4)$
in this case, and in particular, $N\eps = -1$. 
Finding similar criteria or proving existing ones
(e.g. those in \cite{BLS3}) is no problem at all.

\section{Maire's Result}

In \cite{Mai96}, C. Maire showed that a real quadratic number
field $k$ has infinite $2$-class field tower if its class
group contains a subgroup of type $(4,4,4,4)$. Here we will
show that it suffices to assume that its class group in
the strict sense contains a subgroup of type $(4,4,4,4)$.
The method employed is taken from F. Hajir's paper \cite{Haj95}.

\begin{thm}\label{ThM}
Let $k$ be a real quadratic number field. If the strict ideal class
group of $k$ contains a subgroup of type $(4,4,4,4)$, then
the $2$-class field tower of $k$ is infinite.
\end{thm}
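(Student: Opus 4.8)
The plan is to apply the Golod--Shafarevich inequality not to $k$ itself but to a carefully chosen unramified extension, following Hajir. Recall the criterion in the form I will use: if $F$ is a number field for which the Galois group $G$ of the maximal unramified pro-$2$ extension is finite, then writing $d = \trank \Cl(F)$ for its generator rank and $r$ for its relation rank, one has $r < d^2/4$; moreover Shafarevich's bound (with $\delta_2 = 1$ since $-1 \in F$) gives $r \le d + r_1(F) + r_2(F)$. Hence the $2$-class field tower of $F$ is infinite as soon as $d \ge 2 + 2\sqrt{r_1(F) + r_2(F) + 1}$. Since having an infinite $2$-class field tower is inherited downward along extensions unramified at the finite primes, and is insensitive to the strict/ordinary distinction, it suffices to exhibit a single extension $K/k$, unramified at all finite primes, that satisfies this inequality.

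First I would construct $K$. By hypothesis $\Cl^+(k)$ contains a subgroup of type $(4,4,4,4)$, so its $4$-rank is at least $4$ and there is an abundance of unramified (in the strict sense) quadratic extensions of $k$, corresponding to the index-$2$ subgroups of $\Cl^+(k)$. I would select $K/k$ among these to be \emph{totally imaginary}, i.e. ramified at both infinite places of $k$; such a choice is available precisely because we work with the strict class group, and this is exactly the point at which Maire's hypothesis gets relaxed. For this $K$ we have $r_1(K) = 0$ and $r_2(K) = 2$, so the Golod--Shafarevich threshold above becomes $2 + 2\sqrt{3} \approx 5.46$; moreover $\Cl^+(K) = \Cl(K)$, since $K$ has no real places, which removes any further bookkeeping with infinite primes higher up in the tower.

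The heart of the argument is a \emph{pushup} estimate bounding $\trank \Cl(K)$ from below in terms of the $4$-rank of $\Cl^+(k)$. Here I would use the extension map $j \colon \Cl^+(k) \to \Cl(K)$ and the norm map $N \colon \Cl(K) \to \Cl^+(k)$, together with the relations $N \circ j = 2$ and $j \circ N = 1 + \sigma$ (where $\langle \sigma \rangle = \Gal(K/k)$), and the ambiguous class number formula for the unramified extension $K/k$. The mechanism is that the image $j(\Cl^+(k))$, together with its Galois twist under $1 - \sigma$, produces from the classes of order $4$ downstairs enough independent classes of $2$-power order upstairs; controlling the capitulation kernel of $j$ (Hilbert's Theorem 94 forces it to be small) then yields an inequality of the shape $\trank \Cl(K) \ge 2\,\operatorname{rank}_4 \Cl^+(k) - c$ with a small explicit constant $c$. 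With $\operatorname{rank}_4 \Cl^+(k) \ge 4$ this gives $\trank \Cl(K) \ge 6$.

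Combining the two steps, $\trank \Cl(K) \ge 6 > 2 + 2\sqrt{3}$, so Golod--Shafarevich shows the $2$-class field tower of $K$ is infinite; as $K/k$ is unramified at every finite prime, the tower of $k$ is infinite as well. I expect the main obstacle to be the pushup estimate: one must pin down the Galois-module structure of $\Cl(K)/\Cl(K)^2$ and bound the capitulation sharply enough that $c$ stays below $2\,\operatorname{rank}_4 \Cl^+(k) - 6$, while simultaneously verifying that a totally imaginary $K$ can always be chosen without sacrificing the needed independence. The residual case distinction according to the sign of the norm of the fundamental unit, which governs the index $[\Cl^+(k):\Cl(k)]$, is then routine.
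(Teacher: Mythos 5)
Your proposal diverges from the paper's method in an interesting way, but it has two fatal gaps and one structural failure. First, the ``pushup estimate'' $\trank \Cl(K) \ge 2\,\operatorname{rank}_4 \Cl^+(k) - c$ is not a known result, and the mechanism you sketch does not produce it. What the maps $j$ and $N$ actually give is a norm surjection $\Cl(K) \twoheadrightarrow H^*$ onto the index-$2$ subgroup of $\Cl^+(k)$ corresponding to $K$, and this only yields $\trank \Cl(K) \ge \trank H^*_2 \ge \operatorname{rank}_4 \Cl^+(k)$, i.e. $\ge 4$, not $\ge 6$: an order-$4$ class downstairs can perfectly well lift to an order-$8$ class upstairs, contributing nothing new to the $2$-rank, and nothing in your setup rules this out. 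Note also that $j(\Cl^+(k))$ consists of ambiguous classes, so its ``twist under $1-\sigma$'' vanishes; and Hilbert 94 is cited backwards --- it forces the capitulation kernel to be \emph{nontrivial}, not small (upper bounds come from $H^1(\Gal(K/k), E_K)$). The deeper problem is that your $K/k$ is ramified at only the two infinite places, so genus theory over $k$ produces almost no rank; the paper manufactures rank from \emph{many ramified finite primes}, by applying Jehne's bound (Prop.~\ref{PJ}/Prop.~\ref{SchP}) to relative extensions $K/F$ over \emph{smaller} fields $F$ (real quadratic or biquadratic subfields of the genus field), where the $C_4^+$-factorizations guaranteed by R\'edei's theory --- this is exactly how the $4$-rank hypothesis enters --- force the remaining prime discriminants to split completely in $F$, yielding $\rho \approx 12$ ramified primes. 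That conversion of $4$-rank into splitting conditions, organized by the combinatorial Problems \ref{P1} and \ref{P2}, is the substance of the proof and has no counterpart in your argument.

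Second, even granting the pushup, your descent step fails. Since $K$ is totally imaginary, $K/k$ is ramified at both infinite places, so the standard compositum argument (if $k$'s tower were finite with top $L$, then $KL/L$ unramified forces $K \subseteq L$, etc.) breaks down: it requires $K/k$ unramified \emph{everywhere}, including at infinity. An infinite tower over your $K$ shows only that the \emph{narrow} $2$-class field tower of $k$ is infinite, and ``narrow infinite $\Rightarrow$ ordinary infinite'' is not available --- group-theoretically, the quotient of the narrow tower group by the normal closure of the complex conjugations can be finite. This is precisely why the paper insists on totally real auxiliary fields (accepting the worse Golod--Shafarevich threshold $\trank E_K = 8$, hence needing $\rho \ge 12$) rather than exploiting the cheaper threshold of imaginary fields; the trade-off you propose is the tempting one, but it destroys the descent. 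Finally, your construction does not even get off the ground when $N\eps = -1$: in that case $\Cl^+(k) = \Cl(k)$, the strict Hilbert class field is totally real, and \emph{no} totally imaginary quadratic extension unramified at the finite primes exists. That case is not a ``routine'' residual check --- it is exactly Maire's original theorem, and it requires the genus-theoretic argument you were trying to avoid.
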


For the proof of this theorem we need a few results. For an
extension $K/k$, let the relative class group be defined
by $\Cl(K/k) = \ker(N_{K/k}: \Cl(K) \lra \Cl(k))$. Moreover,
let $G_p$ denote the $p$-Sylow subgroup of a finite abelian 
group $G$; we will denote the dimension of $G/G^p$ as an
$\F_p$-vector space by $\prank G$. Let $\Ram (K/k)$ denote 
the set of all primes in $k$ (including those at $\infty$) 
which ramify in $K/k$; we will also need the unit groups 
$E_k$ and $E_K$, as well as the subgroup 
$H = E_k \cap N_{K/k}K^\times$ of $E_k$.
Then Jehne \cite{Jeh79} has shown

\begin{prop}\label{PJ}
Let $K/k$ be a cyclic extension of prime degree $p$.
Then $$\rank \Cl_p(K/k) \ge \# \Ram (K/k) - \prank E_k/H - 1.$$
\end{prop}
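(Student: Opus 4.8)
The plan is to read the inequality off Chevalley's ambiguous class number formula, turning the resulting estimate for the \emph{order} of the ambiguous classes into an estimate for the \emph{$p$-rank} of $\Cl_p(K/k)$ by exploiting the classes of the ramified primes, which are annihilated by $p$ modulo extended classes.

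Write $\Gal(K/k)=\la\sigma\ra$. Since $p$ is prime, every ramified place of $k$ is totally ramified, so $e_v=p$ for $v\in\Ram(K/k)$ and $e_v=1$ otherwise; and $u^p=N_{K/k}u$ for $u\in E_k$ gives $E_k^p\subseteq H$, so that $E_k/H$ is elementary abelian with $[E_k:H]=p^{\prank E_k/H}$. Feeding this into Chevalley's formula $\#\Cl(K)^{\la\sigma\ra}=h_k\prod_v e_v\big/\big(p\,[E_k:H]\big)$ and taking $p$-parts yields $\#\big(\Cl_p(K)^{\la\sigma\ra}\big)=\#\Cl_p(k)\cdot p^{\#\Ram(K/k)-\prank E_k/H-1}$. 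This is the arithmetic heart of the matter; the remaining work is to convert it into a rank statement for the relative class group.

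For each finite $\fp\in\Ram(K/k)$ let $\fP$ be the prime of $K$ above it, so that $\fP^p=\fp\OO_K$ and $[\fP]^p$ lies in the image $j\Cl_p(k)$ of the extension map $\Cl_p(k)\to\Cl_p(K)$. Thus the classes $[\fP]$ generate, modulo $j\Cl_p(k)$, a quotient of $\bigoplus_{\fp\in\Ram(K/k)}\Z/p\Z$; being elementary abelian, its $p$-rank equals its $\F_p$-dimension, so an order count becomes a rank count. After correcting each $[\fP]$ by a suitable extended class so that it acquires trivial norm, these classes land in $\Cl_p(K/k)=\ker N_{K/k}$ and produce there an elementary abelian subquotient. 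The relations among them are governed by the norm residue symbol map $\phi\colon E_k\to\bigoplus_{\fp\in\Ram(K/k)}\Z/p\Z$, $\eta\mapsto\big((\eta,K/k)_\fp\big)_\fp$: by Hasse's norm theorem $\ker\phi=E_k\cap N_{K/k}K^\times=H$ (at unramified places a unit is automatically a local norm), which accounts for the term $\prank E_k/H$, while Hasse's reciprocity law $\prod_v(\eta,K/k)_v=1$ confines $\operatorname{im}\phi$ to the hyperplane $\sum_\fp a_\fp=0$ and so accounts for the extra $-1$. Since the $p$-rank can only drop on passing to subgroups and to quotients, the resulting subquotient of rank $\#\Ram(K/k)-\prank E_k/H-1$ forces $\rank\Cl_p(K/k)\ge\#\Ram(K/k)-\prank E_k/H-1$.

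The main obstacle is precisely this last conversion: showing that the \emph{only} relations among the classes $[\fP]$ in the relative class group are the $\prank E_k/H$ relations coming from $H$ together with the single relation forced by reciprocity — equivalently, identifying the kernel of $\bigoplus_{\fp\in\Ram(K/k)}\Z/p\Z\to\Cl_p(K/k)$. This is the genus-theoretic exact sequence that underlies Chevalley's formula, and its delicate ingredients are the local-global input (Hasse's norm theorem and reciprocity for the norm residue symbol) and the careful bookkeeping of the extended classes $j\Cl_p(k)$ and of capitulation, which measures the discrepancy between the strongly ambiguous classes used here and the full ambiguous group counted by Chevalley's formula.
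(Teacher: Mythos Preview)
The paper does not prove this proposition at all; it simply attributes the result to Jehne~\cite{Jeh79} and quotes it as a black box for use in Proposition~\ref{SchP}. There is therefore no argument in the paper against which to compare yours.

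Your outline is the standard genus-theoretic route: Chevalley's ambiguous class number formula supplies the arithmetic content, and Hasse's norm theorem together with the product formula for the norm residue symbol pin down the relations among the ramified primes. You are also candid that the conversion from an order estimate for $\Cl_p(K)^{\la\sigma\ra}$ to a $p$-rank estimate for $\Cl_p(K/k)$ is the real difficulty, and that this step is only sketched.

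One concrete gap worth flagging: the move ``correcting each $[\fP]$ by a suitable extended class so that it acquires trivial norm'' does not work as written. Since $N_{K/k}\circ j$ is multiplication by~$p$ on $\Cl_p(k)$, finding $c\in\Cl_p(k)$ with $N_{K/k}\big([\fP]\cdot j(c)\big)=1$ amounts to solving $c^{\,p}=[\fp]^{-1}$ in the $p$-group $\Cl_p(k)$, which is generally impossible. The cleaner path is not to force the ramified classes into $\ker N_{K/k}$ individually, but to work with the genus quotient $\Cl_p(K)/\Cl_p(K)^{1-\sigma}$ (where the strongly ambiguous classes and the elementary-abelian structure live naturally) and then use $\Cl_p(K)^{1-\sigma}\subseteq\Cl_p(K/k)$, or equivalently to run the argument through Tate cohomology of the id\`ele class group as Jehne does. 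A second bookkeeping point: for $p=2$ the archimedean places in $\Ram(K/k)$ contribute to the count and to the product formula but furnish no ideal class~$[\fP]$, so your map $\phi$ indexed only by \emph{finite} $\fp$ and the hyperplane constraint $\sum_{\fp}a_{\fp}=0$ need to absorb the infinite places explicitly.
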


Let us apply the inequality of Golod-Shafarevic to the
$p$-class group of $K$. We know that the $p$-class
field tower of $K$ is infinite if
$$\prank \Cl(K) \ge 2 + 2\sqrt{\prank E_K + 1}.$$
By Prop. \ref{PJ}, we know that
$$\prank \Cl(K) \ge \prank \Cl(K/k) \ge 
                \# \Ram (K/k) - \prank E_k/H - 1.$$
Thus $K$ has infinite $p$-class field tower if
$$\# \Ram (K/k) \ge 3 + \prank E_k/H + 2\sqrt{\prank E_K + 1}.$$
We have proved (compare Schoof \cite{Sch86}):

\begin{prop}\label{SchP}
Let $K/k$ be a cyclic extension of prime degree $p$, and let $\rho$
denote the number of finite and infinite primes ramifying in $K/k$.
Then the $p$-class field tower of $K$ is infinite if
$$ \rho \ \ge \ 3 + \prank E_k/H + 2\sqrt{\prank E_K + 1}.$$
Here $H$ is the subgroup of $E_k$ consisting of units which are
norms of elements from $K$, and $\prank G$ denotes the $p$-rank
of $G/G^p$.  
\end{prop}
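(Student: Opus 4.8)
The plan is to combine the Golod--Shafarevich criterion for the field $K$ with Jehne's lower bound from Proposition \ref{PJ}; the argument is essentially the chain of inequalities laid out in the paragraphs preceding the statement. First I would invoke the Golod--Shafarevich inequality in the normalization quoted above: the $p$-class field tower of $K$ is infinite as soon as
$$\prank \Cl(K) \ \ge \ 2 + 2\sqrt{\prank E_K + 1},$$
where the term $\prank E_K + 1$ governs the number of relations of the Galois group of the maximal unramified $p$-extension of $K$ through the usual unit/Euler-characteristic bound. It therefore suffices to exhibit a lower bound for $\prank \Cl(K)$ that, together with this criterion, is implied by the hypothesis on $\rho$.

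Second, I would produce that lower bound from the ramification in $K/k$. The relative class group $\Cl(K/k) = \ker(N_{K/k}: \Cl(K) \lra \Cl(k))$ is a subgroup of $\Cl(K)$, so its $p$-part embeds in $\Cl_p(K)$ and hence $\prank \Cl(K) \ge \rank \Cl_p(K/k)$. Applying Proposition \ref{PJ} to the cyclic degree-$p$ extension $K/k$ then gives
$$\rank \Cl_p(K/k) \ \ge \ \#\Ram(K/k) - \prank E_k/H - 1 \ = \ \rho - \prank E_k/H - 1,$$
the last equality being the definition of $\rho$ as the number of finite and infinite ramified primes and of $H = E_k \cap N_{K/k}K^\times$.

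Finally I would splice the two estimates together. Combining them yields $\prank \Cl(K) \ge \rho - \prank E_k/H - 1$, so the Golod--Shafarevich bound holds whenever
$$\rho - \prank E_k/H - 1 \ \ge \ 2 + 2\sqrt{\prank E_K + 1},$$
which rearranges immediately into the asserted inequality $\rho \ge 3 + \prank E_k/H + 2\sqrt{\prank E_K + 1}$. These manipulations are purely formal; all the real content is carried by the two cited inputs. Accordingly, the main obstacle in a self-contained treatment would be Proposition \ref{PJ} itself --- Jehne's genus-theoretic analysis of the ambiguous class number formula for the cyclic extension $K/k$, which converts ramification data into a rank estimate for $\Cl_p(K/k)$ --- rather than anything in the present deduction; one must also be careful to apply Golod--Shafarevich with exactly the quantity $\prank E_K + 1$ under the root.
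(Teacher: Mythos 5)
Your proposal is correct and follows exactly the paper's own argument: the paper derives Proposition \ref{SchP} in the paragraphs immediately preceding it by chaining the Golod--Shafarevich criterion $\prank \Cl(K) \ge 2 + 2\sqrt{\prank E_K + 1}$ with Jehne's bound from Proposition \ref{PJ} via $\prank \Cl(K) \ge \prank \Cl(K/k)$, just as you do. The only point worth noting is that the inequality $\prank \Cl(K) \ge \prank \Cl(K/k)$, which you justify by the embedding of the relative class group in $\Cl(K)$, is used silently in the paper as well, so there is no substantive difference.
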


\noindent{\em Proof of Thm. \ref{ThM}}.
Since the claim follows directly from the inequality of
Golod-Shafarevic if $\rank \, \Cl_2(k) \ge 6$, we may
assume that $d = \disc k$ is the product of at most
six positive prime discriminants, or of at most seven 
if one of them is negative. The idea is to show that 
a subfield of the genus class field of $k$ satisfies
the inequality of Prop. \ref{SchP}: this will clearly
prove that $k$ has infinite $2$-class field tower.

Assume first that $d = \prod_{j=1}^5 d_j$ is the product 
of five positive prime discriminants. If $\Cl^+(k)$ contains
a subgroup of type $(4,4,4,4)$, then $d = d_1 \cdot d_2d_3d_4d_5$
and $d = d_2 \cdot d_1d_3d_4d_5$
must be $C_4^+$-factorizations. Therefore, the $d_j$ ($j \ge 3$)
split completely in $F = \Q(\sqrt{d_1},\sqrt{d_2}\,)$, hence 
there are 12 prime ideals ramifying in $K/F$, where 
$K = F(\sqrt{d_3d_4d_5}\,)$.
Since $\trank E_K = 8$, the condition of Prop. \ref{SchP}
is satisfied for $K/F$ if $\trank E_F/H \le 3$. But
since $-1$ is a norm in $K/F$ (only prime ideals of norm
$\equiv 1 \bmod 4$ ramify), we have $-1 \in H$; this implies
that $\trank E_F/H \le 3$.

Next suppose that $d = \prod_{j=1}^6 d_j$ is the product of 
six positive prime discriminants. We will show in the next 
section (see Problem \ref{P1}) that either there is 
$C_4^+$-factorizations of type $d_1 \cdot d_2d_3d_4d_5d_6$ 
(then we can apply Prop. \ref{SchP} to the quadratic extension 
$\Q(\sqrt{d},\sqrt{d_1}\,)/\Q(\sqrt{d_1}\,)$, and we are done), 
or there exist (possibly after a suitable permutation of the 
indices) $C_4^+$-factorizations $d_1d_2 \cdot d_3d_4d_5d_6$ 
and $d_1d_3 \cdot d_2d_4d_5d_6$.  In this case consider 
$F = \Q(\sqrt{d_1d_2},\sqrt{d_1d_3}\,)$. The prime ideals 
above $d_4, d_5, d_6$ split completely in $F/\Q$, and 
applying Prop. \ref{SchP} to $F(\sqrt{d}\,)/F$ 
shows exactly as above that $F(\sqrt{d}\,)$ has
infinite $2$-class field tower.

Now we treat the case where $d$ is divisible by some 
negative prime discriminants. First assume that $d$ is 
the product of six prime discriminants. In this case,
all possible factorizations of $d$ as a product of
two positive discriminants must be a $C_4^+$-factorization;
unless $d$ is a product of six negative discriminants,
there exists a factorization of type $d_1 \cdot \prod_{j = 2}^6 d_j$,
where  $d_1 > 0$ is a prime discriminant. In this case,
we can apply Prop. \ref{SchP} to
$\Q(\sqrt{d},\sqrt{d_1}\,)/\Q(\sqrt{d_1}\,)$. 
The case where
all six prime discriminants are negative does not occur here:
since all factorizations are $C_4^+$, we must have
$(d_id_j/q_\ell) = + 1$ for all triples $(i,j,\ell)$
such that $1 \le i < j < \ell \le 6$ (here $q_\ell$
is the unique prime dividing $d_\ell$). At least one
of these triples consist of odd primes $q \equiv 3 \bmod 4$;
call them $q_1, q_2$ and $q_3$, respectively.
We find $(q_1/q_3) = (q_2/q_3)
= - (q_3/q_2) = - (q_1/q_2) = (q_2/q_1) = (q_3/q_1)$,
contradicting the quadratic reciprocity law.

Finally, if $d$ is the product of seven prime discriminants,
then (see Problem \ref{P2}) there exists at least one 
$C_4^+$-factorization of type $d = p \cdot **$ or 
$d = rr' \cdot **$, where $p \equiv 1 \bmod 4$ and 
$r, r'$ are either both positive or both negative prime 
discriminants (see Problem \ref{P2}). Applying Prop. \ref{SchP} to 
$\Q(\sqrt{d},\sqrt{p}\,)/\Q(\sqrt{p}\,)$ or
$\Q(\sqrt{d},\sqrt{rr'}\,)/\Q(\sqrt{rr'}\,)$
yields the desired result.
\qed

Observe that the same remarks as in Hajir's paper apply: 
our proof yields more than we claimed. If, for example,
$d$ is a product of five positive prime discriminants and
admits two $C_4^+$-factorizations of type
$d = d_1\cdot d_2d_3d_4d_5$ and $d = d_2\cdot d_1d_3d_4d_5$
then $k = \Q(\sqrt{d}\,)$ has infinite $2$-class field tower
even if $\Cl(k)$ has $4$-rank equal to $2$. 

We also remark that it is not known whether these results are
best possible: we do not know any imaginary quadratic number field
with $2$-class group of type $(4,4)$ and finite $2$-class field
tower, and similarly, there is no example in the real case 
with $\Cl_2^+(k) \simeq (4,4,4)$. There are, however, real
quadratic fields with  $\Cl_2(k) \simeq (4,4)$ and abelian
$2$-class field tower (cf. \cite{BLS2}).

\section{Some Ramsey-type Problems}

Let the discriminant $d = d_1 \ldots d_t$ be the product of
$t$ positive prime discriminants. Let $X$ be the subspace
of $\F_2^t$ consisting of $0$ and $(1, \ldots, 1)$,
and put $V = \F_2^t/X$. Then the set of possible
factorizations into a product of two discriminants
corresponds bijectively to an element of $V$: in fact,
a factorization of $d$ is a product 
$d = \prod_j d_j^{e_j} \cdot \prod_j d_j^{f_j}$
with $e_j, f_j \in \F_2$ and $e_j + f_j = 1$, and it
corresponds to the image of $(e_1, \ldots, e_t)$ in the
factor space $V = \F_2^t/X$ (exchanging the two factors
corresponds to adding $(1, \ldots, 1)$). We will always
choose representatives with a minimal number of nonzero
coordinates. Moreover, the
product defined on the set of factorizations of $d$ 
corresponds to the addition of the vectors in $V$: the
bijection constructed above is a group homomorphism.

Define a map $\F_2^t \lra \N$ by mapping $(e_1, \ldots, e_t)$ 
to $\min \{ \sum_{j=1}^t e_j, t - \sum_{j=1}^t e_j \}$;
observe that the sum is formed in $\N$ (not in $\F_2$).
Since $(1,\ldots,1) \too 0$, this induces a map $V \lra N$
which we will denote by $S$. Let $V_\nu$ denote the fibers
of $V$ over $\nu$, i.e. put $V_\nu = \{ u \in V: S(u) = \nu \}$.
It is easy to determine their cardinality:

\begin{lem}
If $t = 2s$ is even, then
$ \# V_\nu = \begin{cases}
        \binom{t}{\nu}          & \text{if } \nu < s \\
        \frac12 \binom{t}{s}    & \text{if } \nu = s \end{cases}; $
if $t = 2s+1$ is odd, then $ \# V_\nu = \binom{t}{\nu} $ for
all $\nu \le s$.
\end{lem}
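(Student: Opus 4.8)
The plan is to lift the problem from $V$ back to $\F_2^t$ and count orbits. The nonzero element of $X$ is $\mathbf 1 = (1,\ldots,1)$, and adding it to a vector $v$ replaces each coordinate by its complement, hence sends a vector of Hamming weight $w$ to one of weight $t-w$. Writing $w(v) = \sum_j e_j \in \N$ for the weight, the quantity $\min\{w(v), t - w(v)\}$ is therefore unchanged under $v \mapsto v + \mathbf 1$, which is precisely why $S$ descends to a well-defined map on $V = \F_2^t/X$. First I would record that, since $v = v + \mathbf 1$ would force $\mathbf 1 = 0$, every coset in $V$ has exactly two representatives (we assume $t \ge 1$), so $\# V = 2^{t-1}$ and counting elements of $V_\nu$ amounts to counting the vectors of $\F_2^t$ with $\min\{w, t-w\} = \nu$ and then dividing by the orbit size.

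Next I would carry out the counting by splitting on whether $\nu$ lies strictly below the midpoint $t/2$ or equals it. The vectors $v \in \F_2^t$ with $\min\{w(v), t - w(v)\} = \nu$ are exactly those of weight $\nu$ or weight $t - \nu$. If $\nu < t/2$ (in particular whenever $t = 2s+1$ is odd and $\nu \le s$, or $t = 2s$ is even and $\nu < s$), then $\nu \neq t - \nu$, so complementation pairs each weight-$\nu$ vector with a distinct weight-$(t-\nu)$ vector; using $\binom{t}{t-\nu} = \binom{t}{\nu}$ so that the two weight-classes have equal size, the $2\binom{t}{\nu}$ such vectors form $\binom{t}{\nu}$ cosets, giving $\# V_\nu = \binom{t}{\nu}$.

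The remaining, and most delicate, case is the midpoint $\nu = s$ when $t = 2s$ is even. Here $t - \nu = \nu$, so complementation no longer crosses between two weight-classes but acts within the single set of $\binom{t}{s}$ weight-$s$ vectors. The point to check is that this action is still fixed-point-free --- which holds because $v = v + \mathbf 1$ is impossible for $t \ge 1$ --- so the weight-$s$ vectors split into orbits of size exactly two, yielding $\# V_s = \tfrac12\binom{t}{s}$. I expect this boundary case to be the main obstacle, since it is the only place where the naive ``divide the number of qualifying vectors by two'' reasoning could fail were complementation to have a fixed point; establishing fixed-point-freeness is exactly what rules this out. Assembling the two cases gives the stated formulas for even and odd $t$.
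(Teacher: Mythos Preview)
Your argument is correct: lifting to $\F_2^t$, counting vectors of weight $\nu$ and $t-\nu$, and dividing by the orbit size (after checking that complementation is fixed-point-free) is exactly the natural computation. The paper itself omits the proof, stating only that ``it is easy to determine their cardinality,'' so your write-up simply supplies the routine details the author left to the reader.
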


Now let us formulate our first problem:

\begin{prob}\label{P1}
Let $t = 6$, and suppose that $U \subseteq V$ is a subspace of
dimension $4$. If $U \cap V_1$ is empty, then the equation
$a+b+c = 0$ has solutions in $U \cap V_2$.
\end{prob}

\begin{proof}
Clearly $\# U= 16$ and $\# (U \cap V_0) = 1$; if 
$U \cap V_1 = \varnothing$,
then $ \# (U \cap V_3) \le \# V_3 = 10$ implies
that $ \# (U \cap V_2) \ge 4$. But among four vectors in
$V_2$ there must exist a pair $a,b \in U \cap V_2$ with a 
common coordinate $1$ by Dirichlet's box principle; after 
permuting the indices if necessary we may assume that 
$a = (1,1,0,0,0,0)$ and $b = (1,0,1,0,0,0)$. Clearly
$a+b = (0,1,1,0,0,0) \in U \cap V_2$, and our claim is proved.
\end{proof}

What has this got to do with our $C_4^+$-factorizations?
Well, consider the case where $d$ is the product of six 
positive prime discriminants. The possible 
$C_4^+$-factorizations correspond to $V$; since 
$\rank \Cl_4^+(k) \ge 4$, we must have at least 
four independent $C_4^+$-factorizations, generating
a subspace $U \subset V$ of dimension $4$. Problem
\ref{P1} shows that among these there is one
$C_4^+$-factorization with one factor a prime discriminant,
or there are two $C_4^+$-factorizations 
$d_1d_2\cdot d_3d_4d_5d_6$ and $d_1d_3\cdot d_2d_4d_5d_6$. 

Now consider $\F_2^t/X$ and define 'incomplete traces'
$$T_{2k}: \F_2^t/X \lra \F_2: (u_1, \ldots, u_t) \too
         \sum_{j=1}^{2k} u_j$$
(here the sum {\em is} formed in $\F_2$). This is a well
defined linear map, hence its kernel $V^{(2k)} = \ker T_{2k}$ 
is an $\F_2$-vector space of dimension $t-2$. 
The elements of $V^{(2k)}$ correspond to the set of
factorizations of a discriminant $d$ into a product
of two positive discriminants when $2k$ of the
$t$ prime discriminants dividing $d$ are negative.
Defining $S: V^{(2k)} \lra \N$ as above and denoting
the fibers by $V^{(2k)}_\nu$, we find, for example,
that $\# V^{(2k)}_0 = 1$ and $V^{(2k)}_1 = t-2k$.
In the special case $t = 7$ we get, in addition,
$\# V^{(2)}_2 = 1 +  \binom52 = 11$, 
$\# V^{(2)}_3 = 5 +  \binom52 = 15$, 
$\# V^{(4)}_2 = 3 +  \binom42 = 9$, 
$\# V^{(4)}_3 = 1 +  3\binom52 = 19$, 
$\# V^{(6)}_2 = \binom62 = 15$, and 
$\# V^{(6)}_3 = \binom62 = 15$.

\begin{prob}\label{P2}
Let $t = 7$, $1 \le k \le 3$, and consider a $4$-dimensional 
subspace $U$ of $V^{(2k)}$. Then at least one of 
$U \cap V^{(2k)}_1$ or $U \cap V^{(2k)}_2$ is not empty.
\end{prob}

\begin{proof}
Assume that $U \cap V^{(2k)}_1 = U \cap V^{(2k)}_2 = \varnothing$.
Then $U \subseteq (V^{(2k)}_0 \cup V^{(2k)}_3)$.
If $k = 2$ or $k = 6$, this leads at once to a contradiction,
because then $\# V^{(2k)}_3 = 15 = \# U - 1$, and
$V^{(2k)}_3 \cup V^{(2k)}_0$ is not a subspace in these cases
(for example, $(0,0,1,1,0,0,1)+(0,0,1,0,1,0,1) = (0,0,0,1,1,0,0)
\in V^{(2k)}_2$ for $k = 2$ and $k = 6$). Consider the case 
$k = 4$; since
$U \cap V^{(2k)}_3$ contains $15$ elements, there exists
$u \in U \cap V^{(2k)}_3 \setminus \{(0,0,0,0,1,1,1)\}$.
Permuting the indices if necessary (of course we are not allowed
to exchange one of the first $2k$ indices with one from the
last $t-2k$) we may assume that
$u = (1,1,0,0,0,0,1)$. It is easy to see that $V^{(2k)}_3$
contains exactly six elements $v$ such that $u+v \in V^{(2k)}_2$:
since $\# V^{(4)}_3 = 19$ and $\# U \cap V^{(2k)}_3 
\setminus \{u,(0,0,0,0,1,1,1)\} \ge 14$, one of these $v$ 
must be contained in $U\cap V^{(2k)}_3$. This shows that
$V^{(2k)}_3 \cup V^{(2k)}_0$ does not contain a subspace of 
dimension $4$, and the proof is complete.
\end{proof}

\end{document}